\newtheorem{teiri}{Theorem}
\newtheorem{hodai}{Lemma}
\newcommand{\argmax}{\mathop{\rm argmax}\limits}
\newcommand{\argmin}{\mathop{\rm argmin}\limits}
\newcommand{\prox}{{\rm prox}}
\newcommand{\Rb}{{\mathbb R}}
\newcommand{\trace}{{\rm trace}}
\newcommand{\other}{{\rm otherwise}}
\newcommand{\for}{{\rm for}}
\newcommand{\ifq}{{\rm if}\quad}
\newcommand{\mate}{{\mathcal E}}
\newcommand{\Cm}{{\rm C}}
\newfont{\bg}{cmr9 scaled\magstep4}
\newcommand{\bigzerol}{\smash{\lower1.0ex\hbox{\bg 0}}}
\newcommand{\bigzerou}{%
	\smash{\hbox{\bg 0}}}
\providecommand{\keywords}[1]
{
	\small	
	\textbf{Keywords---} #1
}
	\title{Converting ADMM to a Proximal Gradient \\for Efficient Sparse Estimation}		
	\author{Ryosuke Shinmura and Joe Suzuki}												
	\date{\today}
\begin{document}

\maketitle

%
%
%
%

\abstract{
	In sparse estimation, such as fused lasso and convex clustering, we apply either the proximal gradient method or the alternating direction method of multipliers (ADMM) to solve the problem. It takes time to include matrix division in the former case, while an efficient method such as FISTA (fast iterative shrinkage-thresholding algorithm) has been developed in the latter case.
	This paper proposes a general method for converting the ADMM solution to the proximal gradient method, assuming that assumption that the derivative of the objective function is Lipschitz continuous. Then, we apply it to sparse estimation problems, such as sparse convex clustering and trend filtering, and we show by numerical experiments that we can obtain a significant improvement in terms of efficiency.}

\keywords{Proximal Gradient, ADMM, FISTA, Lipschitz Constant, Sparse Estimation}

\maketitle
\section{Introduction}
\label{intro}
We consider sparse estimation, in particular for the least absolute shrinkage and selection operator (lasso  \citealt{lasso}).
Suppose that we have gene expression data for breast cancer patients. Out of a set of $p=10000$ genes, we wish to identify the genes that determine each patient's positive or negative status from the case and control data ($n=100$ in total).
The conventional statistical approaches are not applicable to cases in which $p$ is enormous compared with the sample size $n$, such as the considered case.
Sparse estimation seeks to identify significant covariates that relate to the predicter.
For linear regression of $p$ variables (we assume the intercept to be zero), from $n$ examples,
we find the values of the coefficient $\beta$ that minimize the square error $\|y-X\beta\|^2$ for $X\in {\mathbb R}^{n\times p}$ and $y\in {\mathbb R}^n$.
However, if $p$ is too large, we are often tempted to neglect variables that are not important. In sparse estimation, assuming that each column of $X$ is normalized and choosing a positive constant $\lambda$ if the absolute value of $\beta_j$ that is obtained by least squares regression is smaller than $\lambda$, we may set the value to zero. Otherwise, the absolute value is reduced by $\lambda$. The formal definition is to find $\beta$ that minimizes
$$\|y-X\beta\|^2_2+\lambda \|\beta\|_1\ ,$$
where $\|\beta\|_1:=\sum_{i=1}^p \lvert \beta_j \rvert$.
Since the two terms are convex, the whole problem is convex. The definition of convexity is given in Section \ref{convex}. One of the main reasons that lasso is so famous is that there is an efficient procedure for finding the solution because the optimization is convex.

However, suppose we use an information criterion such as Akaike's information criterion (AIC)\citep{akaike1974new} or the Bayesian information criterion (BIC)\citep{schwarz1978estimating}.
In that case, the second term is $\sum_{j=1}^p I[\beta_i\not=0]$ (the number of variables) times a constant 
such as $2$ or $\log n$
, where $I[A]$ is one and zero when the condition $A$ is true and false, respectively. The function $f(x)=I[x\not=0]$ violates convexity \citep{Suzuki21}.
Thus, the second term in the definition of the information criterion is nonconvex,
and we require all $2^p$ combinations to find the model that minimizes the AIC or BIC.

Lasso includes variants\citep{hastie2019statistical} such as logistic regression, Poisson regression, Cox regression and linear regression and extensions such as group lasso\citep{Yuan2006}, fused lasso\citep{fused}, graphical lasso, and convex clustering\citep{Hocking2011, Lindsten2011, Pelckmans2005}.
Each formulates the problem as a convex optimization problem, and obtaining solutions is efficient.
They realize model selection by setting to zero the parameter values that are not significant.

Thus, sparse estimation is currently being used in many fields such as image recognition and survival time analysis. As more and more data becomes available and the dimension of the data increases, finding variables that are strongly related to sparse estimation and computational efficiency due to the large number of $0$ will become more important. However, while efficient methods have been proposed for each problem such as lasso as a release, ADMM used for general problems is not efficient for some problems, so efficient algorithms are needed.

In this paper, we focus on the efficiency of sparse estimation procedures.
In particular, we are motivated by the following observation.
Two main approaches to finding the solution of lasso are the proximal gradient method and ADMM (alternating direction method of multipliers)\cite{BoydADMM,Gabay1976}.
For detailed descriptions of the two procedures, see Section 3.
We wonder why some procedures, such as fused lasso and graphical lasso, use ADMM, while others, such as group lasso and convex clustering, use the proximal gradient method.
It seems that the proximal gradient method is more efficient than ADMM because efficient modifications such as the fast iterative shrinkage-thresholding algorithm (FISTA)\citep{Beck2009} can be easily used for the former whereas inverse matrix computation is inevitable for the latter. The main contribution of this paper is the following claim:

\fbox{
	\begin{minipage}{110mm}
		The sparse estimation procedure that is realized by ADMM can be transformed to a sparse estimation procedure that is realized by the proximal gradient method as long as its Lipschitz constant exists.
	\end{minipage}
}

This implies that sparse estimation will be improved if the proximal gradient-based procedure with a Lipschitz constant is more efficient than the ADMM-based procedure.

The Lipschitz condition is satisfied by adding a regularization term such as the $L_1,L_2$-norm to the loss function of linear regression, such as Lasso, Sparse group lasso, Sparse convex clustering, and trend filtering. The Lipschitz condition is also satisfied by adding a regularization term to the loss function of logistic loss or cox regression, which is used when the target variable is binary or multilevel.

The remainder of this paper is organized as follows. Section 2 presents work that is related to the results in this paper. Section 3 presents background knowledge for understanding this paper. Section 4 derives a general method for converting a problem that is solved by ADMM to a problem that is solved by the proximal gradient method. Sections 5 and 6 apply it to sparse convex clustering\citep{Wang2018} and trend filtering\citep{Kim2009} to evaluate its performance. Finally, Section 7 summarizes the results of this paper and describes future work.

\section{Related Work}

The optimization problem that is considered in this paper is
\begin{equation}
	\label{hondai}
	\min_x \; f(x)+g(x)+h(Ax)\ .
\end{equation}
for convex $f,g:\Rb^n\rightarrow\Rb,h:\Rb^m\rightarrow\Rb$ and $A\in\Rb^{m\times n}$,
where $f$ is differentiable, $\nabla f$ is Lipschitz continuous with parameter $L_f>0$, and $h$ is a closed convex function.
For example, in sparse convex clustering\citep{Wang2018}, $f$ is the loss function, and $g,h$ are regularization terms (constraints).
The dual problem for (\ref{hondai}) is
\begin{align}
	\label{dual}
	\min_{y\in \Rb^m}\; (f+g)^*(-A^Ty) + h^*(y)\ ,
\end{align}
where $(f+g)^*,h^*$ are the conjugate functions of $f+g,h$.

For optimization problems such as (\ref{hondai}), sparse estimation often uses the proximal gradient method when, for example,
$h\equiv 0$, and ADMM otherwise.
Although the implementation of ADMM is simple and it can be applied to various problems,
it is often computationally expensive.
For example, we often need to compute the inverse matrix to solve the optimization problem.
To simplify the computation, there are generalized ADMM \citep{APGM,deng2016global} which apply the proximal gradient method to ADMM. Nevertheless, the convergence becomes slow when $n$ is large.

In addition, we may use the alternating minimization algorithm (AMA)\citep{ama,Davis-yinsp}, which slightly modifies the ADMM steps.
We can regard it as an application of the proximal gradient method to the dual problem (\ref{dual}).
However, this requires either $f$ or $g$ to be strongly convex and have narrow applicability.
In addition, when row $m$ of $A$ is large, it becomes a proximal gradient method with many dimensions, and convergence becomes slow.

In this paper, we apply the proximal gradient method to method of multiplier \citep{Rockafellar1976}
to convert the ADMM problem to a proximal gradient method problem and solve it. Since the proposed method applies the proximal gradient method to the main problem,
it can solve the problem quickly when, for example, $m$ is large.

\section{Preliminaries}
\label{preliminary}

\subsection{Convex function and its subdifferential}
\label{convex}
Function $f:\Rb^n \rightarrow \Rb$ is {\it convex} if
\begin{align}
	\label{totsusei}
	f((1-\lambda)x+\lambda y)\leq(1-\lambda)f(x)+\lambda f(y)
\end{align}
for any $x,y\in \Rb^n$ and $0\leq \lambda \leq 1$.
In particular, if no equality holds in (\ref{totsusei}) for any $x,y\in \Rb^n$ and $0<\lambda<1$,
the function $f$ is {\it strongly convex}.
Moreover, a convex function $f:\Rb^n \rightarrow \Rb$ is {\it closed} if
$\{x\in {\mathbb R}^n\vert f(x)\leq \alpha \}$ is a closed set for each $\alpha\in {\mathbb R}$.

For convex function $f:\Rb^n\rightarrow \Rb$, the set of $z\in \Rb^n$ such that
\begin{align}
	\label{retsubi}
	f(x)\geq f(x_0)+\langle z, , x-x_0 \rangle
\end{align}
for any $x\in \Rb^n$ is the {\it subdifferential} of $f$ at $x_0\in \Rb^n$ and written as $\partial f(x_0)$.
For example, the subdifferential of $f(x)=\lvert x \rvert,x\in \Rb$ at $x=0$ is the set of $z$ such that $\lvert x \rvert \geq \lvert zx \rvert ,x\in \Rb$,
and we write $\partial f(0)=\{z\in \Rb\mid \lvert z \rvert \leq 1\}$.

\subsection{ADMM}
Let $A\in\Rb^{p\times n},B\in\Rb^{p\times m},c\in \Rb^p$, and $f,g:\Rb^n \rightarrow \Rb,h:\Rb^m \rightarrow \Rb$ be convex.
We consider the convex optimization
\begin{align}
	\label{admm}
	\min_{x,y} \;\;f(x)+g(x)+h(y)\\
	{\rm subject \,\,\, to} \,\,\, Ax+By = c\ .\nonumber
\end{align}
When we apply the ADMM, for the convex optimization formulated as in (\ref{admm}),
we define the augmented Lagrangian
\begin{align}
	\label{admmlag}
	L_\nu(x,y,\lambda)=f(x)+g(x)+h(y)+\langle \lambda,Ax+By-c \rangle +\frac{\nu}{2}\| Ax+By-c\|^2
\end{align}
for $\nu>0$, and repeatedly update via the equations
\begin{align*}
	x^{(k+1)}&=\argmin_{x}\; L_\nu(x,y^{(k)},\lambda^{(k)})\\
	y^{(k+1)}&=\argmin_{y}\; L_\nu(x^{(k+1)},y,\lambda^{(k)})\\
	\lambda^{(k+1)}&=\lambda^{(k)}+\nu(Ax^{(k+1)}+By^{(k+1)}-c) 
\end{align*}
from the initial values $y^{(0)}$ and $\lambda^{(0)}$ until convergence to obtain the solution.

\subsection{Proximal Gradient Method}
The proximal gradient method finds the minimum solution of $F$ expressed by the sum of convex functions $f,g$ such that
$f$ is differentiable and $g$, which is not necessarily differentiable.
We define the functions
\begin{align}
	\label{kinsetuq}
	Q_\eta(x,y)&:=f(y)+\langle x-y,\nabla f(y)\rangle +\frac{1}{2\eta}\|x-y\|^2+g(x)\\
	\label{kinsetup}
	p_\eta(y)&:=\argmin_x \;\; Q_\eta (x,y)
\end{align}
for $\eta>0$, and generate the sequence $\{x_k\}$ via
\begin{equation}
	x_{k+1} \leftarrow p_\eta(x_k)
\end{equation}
from the initial value $x_0$ until convergence to obtain the solution.
If we define the {\it proximal map} w.r.t. $g:\Rb^n\rightarrow \Rb$ by
\begin{align}
	\label{kinsetusyazou}
	\prox_{g}(y)=\argmin_x \left\{g(x)+\frac{1}{2}\|y-x\|^2_2\ \right\}\, 
\end{align}
then (\ref{kinsetup}) can be expressed by
\begin{align}
	p_{\eta}(y)&=\argmin_x Q_\eta (x,y)\nonumber \\
	&=\argmin_x \left\{\langle x-y, \nabla f(y)\rangle +\frac{1}{2\eta}\|x-y\|^2+g(x)\right\} \nonumber \\
	&=\argmin_x \left\{g(x)+\frac{1}{2\eta}\|x-y-\eta \nabla f(y)\|^2\right\} \nonumber \\
	\label{kinprox}
	&=\prox_{\eta h}(y-\eta \nabla f(y)) \ 
\end{align}
In each iteration, the proximal gradient seeks $x$ that minimizes
the sum of the quadratic approximation of $g(x)$ around $x_k$ and $h(x)$.
The ISTA (iterative shrinkage-thresholding algorithm) procedure obtains $O(k^{-1})$ accuracy for the number of updates $k$ \citep{Beck2009}.
We may replace ISTA by the faster procedure below: using the sequence $\{\alpha_k\}$ such that
$\alpha_0=1,\alpha_{k+1}=\frac{1+\sqrt{1+4\alpha_t^2}}{2}$, generates $\{x_k\}$ and $\{y_k\}$ via the equations
\begin{align*}
	x_{k}&=p_{\eta}(y_k) \\
	y_{k+1}&=x_k+\frac{\alpha_k-1}{\alpha_{k+1}}(x_k-x_{k-1})
\end{align*}
from the initial value $y_1=x_0$ until convergence to obtain the solution.
Note that the quantity $\frac{\alpha_k-1}{\alpha_{k+1}}$ is zero when $k=1$, increases with $k$, and converges to one  as $k\rightarrow \infty$.
It behaves similarly to ISTA  when $k$ is small, and accelerates the updates when $k$ increases to gain efficiency.
The FISTA (fast iterative shrinkage-thresholding algorithm) procedure obtains $O(k^{-2})$ accuracy for the number of updates $k$ \citep{Beck2009}.
This paper mainly uses the FISTA.

Even if it is updated using the formulas given by ISTA and FISTA,
they do not necessarily converge to $x$, which minimizes the objective function $f(x)$ unless
we choose an appropriate parameter $\eta$.
In the following, we assume that $\nabla f$ is Lipschitz continuous, which means that there exists $L_f>0$ such that for arbitrary $x,y$,
\begin{align}
	\label{lipschitz}
	\| \nabla f(x)-\nabla f(y) \|\leq L_f \| x -y \|\ .
\end{align}
It is known that ISTA and FISTA converge to $x$ that minimizes $F(x)$
if we choose $\eta>0$ as $0<\eta\leq \frac{1}{L}$ \citep{Beck2009}.

\section{The Proposed Method}
\label{yaru}
(\ref{hondai}) is equivalent to the following:
\begin{align}
	\label{seiyaku}
	\min_{x,y} \; f(x)+g(x)+h(y)\\
	{\rm subject \,\,\, to} \,\,\, Ax = y\nonumber\ .
\end{align}
If we apply ADMM, then the augmented Lagrangian (\ref{admmlag}) for (\ref{seiyaku}) is
\begin{align}
	\label{honkaku}
	L_\nu(x,y,\lambda)=f(x)+g(x)+h(y)+\langle \lambda,Ax-y \rangle +\frac{\nu}{2}\| Ax-y\|^2
\end{align}
for $\nu>0$.

In the proposed method, we update $x,y$ simultaneously via the
\begin{align}
	\label{teian}
	(x^{(k+1)},y^{(k+1)})&= \argmin_{x,y}\{L_{\nu^{(k)}}(x,y,\lambda^{(k)})\} \\
	\lambda^{(k+1)}&=\lambda^{(k)}+(Ax^{(k+1)}-y^{(k+1)}) \nonumber 
\end{align}
from the initial value $\lambda^{(0)}$. Although, in general, changing the value $\nu$ for each $k$ may improve the performance, we set $\nu$ to be constant to proceed with the derivation, making the notation simple.

To update via (\ref{teian}), we consider the minimization of
\begin{align*}
	\phi(x):&=\min_y L_{\nu}(x,y,\lambda^{(k)})\\
	&=f(x)+ g(x)+\min_y \{h(y)+\langle \lambda^{(k)}, Ax-y\rangle +\frac{\nu}{2}\|Ax-y\|^2\}
\end{align*}
w.r.t. $y$.

\begin{teiri}
	If we define $\phi_1(x):=f(x)+\min_y \{h(y)+\langle \lambda^{(k)}, Ax-y\rangle +\frac{\nu}{2}\|Ax-y\|^2\}$, then $\phi_1$ is differentiable and we have 
	\begin{align}
		\label{nabla1}
		\nabla\phi_1(x) = \nabla f(x)+A^T(\prox_{\nu h^*}(\nu Ax +\lambda^{(k)})).
	\end{align}
\end{teiri}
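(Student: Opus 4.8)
The plan is to reduce the claim to a statement about the Moreau envelope of a suitably shifted copy of $h$ and then apply the chain rule through the linear map $A$. First I would factor $\phi_1$ as $\phi_1(x)=f(x)+M(Ax)$, where $M(u):=\min_y\{h(y)+\langle\lambda^{(k)},u-y\rangle+\frac{\nu}{2}\|u-y\|^2\}$, so that the whole problem reduces to understanding the single-variable function $M:\Rb^m\to\Rb$. Since $f$ is already assumed differentiable, once $M$ is shown to be differentiable the chain rule immediately yields $\nabla\phi_1(x)=\nabla f(x)+A^T\nabla M(Ax)$, and the entire content of the theorem becomes the identity $\nabla M(u)=\prox_{\nu h^*}(\nu u+\lambda^{(k)})$.

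To analyze $M$ I would observe that the inner objective $\psi(y;u):=h(y)+\langle\lambda^{(k)},u-y\rangle+\frac{\nu}{2}\|u-y\|^2$ is strongly convex in $y$ (the quadratic term is strongly convex and $h$ is convex), so the minimizer $y^\star(u)$ exists and is unique. Differentiability of $M$ with $\nabla M(u)=\lambda^{(k)}+\nu(u-y^\star(u))$ then follows from an envelope/Danskin-type argument; equivalently, one may note that after completing the square $M$ is, up to the affine term $\langle\lambda^{(k)},u\rangle$, exactly the Moreau envelope of $\tilde h(y):=h(y)-\langle\lambda^{(k)},y\rangle$ with parameter $1/\nu$, which is known to be continuously differentiable whenever $h$ (hence $\tilde h$) is closed convex.

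The crux is to identify $\lambda^{(k)}+\nu(u-y^\star(u))$ with $\prox_{\nu h^*}(\nu u+\lambda^{(k)})$. Writing the first-order optimality condition for the inner problem gives $\lambda^{(k)}+\nu(u-y^\star)\in\partial h(y^\star)$. Setting $w:=\lambda^{(k)}+\nu(u-y^\star)$ and $z:=\nu u+\lambda^{(k)}$, a short computation shows $z-w=\nu y^\star$. I would then verify that $w$ satisfies the optimality condition for $\prox_{\nu h^*}(z)$, namely $\tfrac{z-w}{\nu}=y^\star\in\partial h^*(w)$; this is precisely the conjugate subgradient inversion $w\in\partial h(y^\star)\iff y^\star\in\partial h^*(w)$, which holds because $h$ is closed convex. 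By uniqueness of the prox minimizer this forces $w=\prox_{\nu h^*}(z)$, giving $\nabla M(u)=\prox_{\nu h^*}(\nu u+\lambda^{(k)})$ and hence the stated formula.

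I expect the main obstacle to be the rigorous justification of the differentiation under the minimum (the envelope step) together with the careful bookkeeping of the shift by $\lambda^{(k)}$ and the scaling by $\nu$; in particular, correctly invoking the subgradient inversion for the conjugate and keeping track of the variable in which each prox is taken. The linear-map chain rule and the closedness hypothesis on $h$ (which simultaneously guarantees uniqueness of $y^\star$ and the subgradient inversion) are the two facts doing the real work.
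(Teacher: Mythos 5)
Your proposal is correct, and it reaches the identity by a genuinely different mechanism than the paper, even though the high-level decomposition $\phi_1(x)=f(x)+M(Ax)$ and the chain rule through $A$ are shared. The paper proceeds algebraically: it rewrites the inner minimum as $-r^*(\nu Ax+\lambda^{(k)})+\langle \lambda^{(k)},Ax\rangle+\frac{\nu}{2}\|Ax\|^2$ with $r=h+\frac{\nu}{2}\|\cdot\|^2$, invokes Rockafellar's Theorem 26.3 to differentiate the conjugate $r^*$ (obtaining $\nabla r^*(v)=\prox_{h/\nu}(v/\nu)$), and then cites Moreau's decomposition $\prox_{\gamma s}(z)+\gamma\prox_{s^*/\gamma}(\gamma^{-1}z)=z$ to convert the resulting expression $\lambda^{(k)}+\nu Ax-\nu\prox_{h/\nu}(Ax+\nu^{-1}\lambda^{(k)})$ into $\prox_{\nu h^*}(\nu Ax+\lambda^{(k)})$. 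You instead recognize $M$ as an affine term plus the Moreau envelope of the tilted function $\tilde h=h-\langle\lambda^{(k)},\cdot\rangle$, use the standard differentiability of Moreau envelopes to get $\nabla M(u)=\lambda^{(k)}+\nu(u-y^\star(u))$, and then identify that vector with $\prox_{\nu h^*}(\nu u+\lambda^{(k)})$ directly from the first-order optimality condition $\lambda^{(k)}+\nu(u-y^\star)\in\partial h(y^\star)$, the conjugate subgradient inversion $w\in\partial h(y)\iff y\in\partial h^*(w)$ (valid since $h$ is closed convex), and strong convexity of the prox objective. In effect, your last step is a self-contained proof of exactly the instance of Moreau's decomposition that the paper cites as a black box, while your differentiability step (Moreau envelope) is the same theorem as Rockafellar 26.3 in different clothing. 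What the paper's route buys is a purely computational derivation resting on two named lemmas; what yours buys is self-containedness and a cleaner conceptual statement of where the gradient formula $\lambda^{(k)}+\nu(u-y^\star(u))$ comes from --- an expression the paper itself re-derives after the theorem (via the substitution $z=\nu^{-1}\lambda^{(k)}+Ax$, $\gamma=\nu^{-1}$ in Moreau's lemma) to obtain the $\lambda$-update. One caution: a bare ``Danskin-type'' citation would be insufficient here since the minimization is over a non-compact set; your fallback to the Moreau-envelope differentiability theorem is what makes that step rigorous, so that is the fact you should actually cite.
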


\begin{proof}
	we define the function $\psi(x)$ obtained by removing $f(x),g(x)$ from $\phi(x)$:
	\begin{align}
		\psi(x):&=\min_y \{h(y)+\langle \lambda^{(k)} , Ax-y\rangle +\frac{\nu}{2}\|Ax-y\|^2\} \nonumber\\
		&=\min_y\{h(y)+\frac{\nu}{2}\|y\|^2-\langle y,\nu Ax+\lambda^{(k)}\rangle\}+ \langle \lambda^{(k)},Ax\rangle +\frac{\nu}{2}\|Ax\|^2\label{145}\\
		&=-\max_y\{\langle y,\nu Ax+\lambda^{(k)} \rangle -h(y)-\frac{\nu}{2} \|y\|^2\}+\langle \lambda^{(k)},Ax\rangle +\frac{\nu}{2}\|Ax\|^2\nonumber\\
		\label{psi}
		&=-r^*(\nu A x+\lambda^{(k)})+\langle \lambda^{(k)},Ax\rangle +\frac{\nu}{2}\|Ax\|^2\ ,
	\end{align}
	where $r(u):=h(u)+\frac{\nu}{2}\|u\|^2$ and $r^*(v):=\sup_{u}\{\langle u,v \rangle- r(u)\}$.
	Because the first term of (\ref{145}) can be written as 
	$$
	\min_y\{h(y)+\frac{\nu}{2}\|y-(Ax+\frac{\lambda^{(k)}}{\nu})\|^2-\frac{\nu}{2}\|Ax+\frac{\lambda^{(k)}}{\nu}\|^2\}\ ,
	$$
	the quantity $h(y)+\langle \lambda^{(k)}, Ax-y\rangle +\frac{\nu}{2}\|Ax-y\|^2$ is minimized when
	\begin{align}
		\label{ycousin}
		y^*(x) = \prox_{\nu^{-1}h}(Ax+\nu^{-1}\lambda^{(k)})\, 
	\end{align}
	where $\prox_{\nu^{-1}h}(\cdot)$ is the proximal map defined in (\ref{kinsetusyazou}).
	Then, we notice the following lemma:
	\begin{hodai}[\citealt{rockafellar1970} Theorem 26.3]
		\label{conj}
		Assume that $s:\Rb^m\rightarrow \Rb$ is closed and strongly convex.
		Then, conjugate function $s^*$ is differentiable and $\nabla s^*(v)=\argmax_{u\in \Rb^m}\{\langle u,v \rangle -s(u)\}$ for $v\in \Rb^m$.
	\end{hodai}
	
	From Lemma\ref{conj}, we have
	\begin{align}
		\nabla r^*(v) &= \argmax_u\{\langle u,v \rangle-r(u)\}=\argmax_u\{\langle u,v \rangle-h(u)-\frac{\nu}{2}\|u\|^2\}\nonumber\\
		&=\argmin_u\{\frac{1}{2}\|u\|^2+\frac{1}{\nu}h(u)-\langle\frac{v}{\nu},u\rangle\} \nonumber\\
		&=\argmin_u\{\frac{1}{2}\|u-\frac{v}{\nu}\|^2 + \frac{1}{\nu}h(u)\}\nonumber\\
		\label{conjugate}
		&=\prox_{h/\nu}(\frac{v}{\nu})\ .
	\end{align}
	If we substitute $v=\nu A x+\lambda^{(k)}$ into (\ref{conjugate}), we have
	\begin{align}
		\label{conjnab}
		\nabla r^*(\nu Ax+\lambda^{(k)})=\nu A^T \prox_{h/\nu}(Ax +\nu^{-1}\lambda^{(k)})\ .
	\end{align}
	Moreover, we notice another lemma:
	\begin{hodai}[\citealt{Moreau1965}]
		\label{moreau}
		If the function $s:\Rb^m\rightarrow \Rb$ is convex, then for any $z\in\Rb^m$ and $\gamma>0$, we have
		\[
		\prox_{\gamma s}(z) +\gamma\prox_{s^*/\gamma}(\gamma^{-1}z)=z 
		\]\ .
	\end{hodai}
	
	From Lemma \ref{moreau}, (\ref{psi}), and (\ref{conjnab}), we have
	\begin{align*}
		\nabla\phi_1(x)&=\nabla f(x) +A^T\lambda^{(k)} +\nu A^TAx -\nu
		A^T\prox_{h/\nu}(Ax+\nu^{-1}\lambda^{(k)})\\
		&=\nabla f(x)+A^T(\prox_{\nu h^*}(\nu Ax +\lambda^{(k)}))\ .
	\end{align*}
\end{proof}
Since $\phi(x)=f(x)+g(x)+\psi(x)=\phi_1(x)+g(x)$ can be expressed by the sum of differentiable $\phi_1(x)$ and nondifferentiable $g(x)$,
the minimization can be solved via the proximal gradient: update each time
via\qed
\begin{align}
	\label{proximal}
	x^{(l+1)} = \prox_{\eta g}(x^{(l)}-\eta \nabla \phi_1(x^{(l)}))
\end{align}
(see (\ref{kinprox})),
where parameter $\eta>0$ is $\eta\leq \frac{1}{L}$ for $L>0$ such that
\[
\|\nabla \phi_1(x_1)-\nabla \phi_1(x_2)\|\leq L\|x_1-x_2\|
\]

Then, convergence is guaranteed.

\begin{hodai}
	\label{nonex}
	If the function $h:\Rb^m\rightarrow \Rb$ is convex, then
	\[
	\|\prox_h(x)-\prox_h(y)\|\leq \|x-y\| \quad \for \quad \forall x,y\in\Rb^m\ .
	\]
\end{hodai}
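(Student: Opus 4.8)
The plan is to exploit the first-order optimality characterization of the proximal map together with the monotonicity of the subdifferential of a convex function. First I would fix $x,y\in\Rb^m$ and set $u:=\prox_h(x)$ and $v:=\prox_h(y)$; since the objective $h(\cdot)+\frac{1}{2}\|x-\cdot\|^2$ is strongly convex, each minimizer is unique, so $u$ and $v$ are well defined. Writing the optimality condition $0\in\partial h(u)+(u-x)$ for the minimizer $u$ (and analogously for $v$), I obtain the inclusions
\begin{align*}
	x-u\in\partial h(u),\qquad y-v\in\partial h(v).
\end{align*}

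Next I would invoke the monotonicity of the subdifferential. From the defining inequality (\ref{retsubi}) applied at $u$ with subgradient $x-u$ and evaluated at $v$, and applied at $v$ with subgradient $y-v$ and evaluated at $u$, adding the two resulting inequalities cancels the function values $h(u),h(v)$ and yields
\begin{align*}
	\langle (x-u)-(y-v),\,u-v\rangle\geq 0.
\end{align*}
Rearranging the inner product as $\langle (x-y)-(u-v),u-v\rangle\geq 0$ gives $\langle x-y,\,u-v\rangle\geq\|u-v\|^2$.

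Finally I would apply the Cauchy--Schwarz inequality to bound $\langle x-y,u-v\rangle\leq\|x-y\|\,\|u-v\|$, so that $\|u-v\|^2\leq\|x-y\|\,\|u-v\|$. Dividing by $\|u-v\|$ when it is nonzero (the case $u=v$ being trivial) delivers the claimed estimate $\|\prox_h(x)-\prox_h(y)\|\leq\|x-y\|$. Note that the intermediate inequality $\|u-v\|^2\leq\langle x-y,u-v\rangle$ is in fact the stronger \emph{firm} nonexpansiveness, from which the stated bound follows immediately.

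The only delicate point is the passage from the definition of $\prox_h$ to the subgradient inclusion $x-u\in\partial h(u)$: this requires that the subdifferential of the sum $h(w)+\frac{1}{2}\|x-w\|^2$ splits as $\partial h(w)+(w-x)$, which holds because the quadratic term is everywhere differentiable, so the subdifferential sum rule applies without any qualification condition. Once this splitting is established, the remaining steps are routine and purely algebraic.
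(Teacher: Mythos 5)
Your proof is correct and follows essentially the same route as the paper: both derive the subgradient inclusions $x-u\in\partial h(u)$, $y-v\in\partial h(v)$ from the optimality conditions of the proximal map and then apply monotonicity of the subdifferential of the convex function $h$. The only difference is cosmetic: the paper concludes by writing $\|x-y\|^2=\|(u-v)+(s-t)\|^2$ with $s=x-u$, $t=y-v$, expanding the square and discarding the nonnegative terms, whereas you pass through the firm-nonexpansiveness inequality $\|u-v\|^2\leq\langle x-y,\,u-v\rangle$ and finish with Cauchy--Schwarz.
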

\begin{proof}
	Let $u:=\prox_h(x),v:=\prox_h(y)$.
	Then, because $u$ minimizes $h(u)+\frac{1}{2}\|x-u\|^2$, if we subdifferentiate it by $u$ and equate it to be zero,
	there exists $s\in \partial h(u)$ such that $s+u-x=0$. Similarly, we have $t:=y-v$ is in $\partial h(v)$. 
	Because  the convexity of $h$ means  $\langle u-v,s -t \rangle \geq 0$, we have 
	\begin{align*}
		\|x-y\|^2 &= \|u+s - (v+t)\|^2 \\
		&=\|u-v\|^2 + 2 \langle u-v , s-t \rangle + \|s-t\|^2\geq \|u-v\|^2\ .
	\end{align*}
\end{proof}

\begin{teiri}
	\label{teirilip}
	If $\;\nabla f$ is Lipschitz continuous with parameter $L_f$, then for any $x_1,x_2 \in \Rb^n$, we have 
	\begin{align}
		\label{teiriL}
		\|\nabla \phi_1(x_1)-\nabla \phi_1(x_2)\|\leq (L_f+\nu\lambda_{\max}(A^TA))\|x_1-x_2\|\ .
	\end{align}
\end{teiri}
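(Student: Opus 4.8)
The plan is to start from the explicit formula for $\nabla\phi_1$ established in (\ref{nabla1}), namely $\nabla\phi_1(x)=\nabla f(x)+A^T\prox_{\nu h^*}(\nu Ax+\lambda^{(k)})$, and to bound the difference $\nabla\phi_1(x_1)-\nabla\phi_1(x_2)$ term by term. Writing this difference as the sum of the gradient part $\nabla f(x_1)-\nabla f(x_2)$ and the proximal part $A^T[\prox_{\nu h^*}(\nu Ax_1+\lambda^{(k)})-\prox_{\nu h^*}(\nu Ax_2+\lambda^{(k)})]$, I would apply the triangle inequality to split the norm into these two contributions and bound each separately.

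The first contribution is immediately controlled by the hypothesis: since $\nabla f$ is Lipschitz with parameter $L_f$, it is at most $L_f\|x_1-x_2\|$. For the second contribution I would first pull out the operator $A^T$, picking up its operator norm $\|A^T\|_{\mathrm{op}}$ as a factor, and then invoke Lemma \ref{nonex}. To apply that lemma I need $\nu h^*$ to be convex; this holds because $h$ is convex, hence $h^*$ is convex as a supremum of affine functions, and scaling by $\nu>0$ preserves convexity. Nonexpansiveness of the proximal map then bounds $\|\prox_{\nu h^*}(\nu Ax_1+\lambda^{(k)})-\prox_{\nu h^*}(\nu Ax_2+\lambda^{(k)})\|$ by $\|\nu Ax_1-\nu Ax_2\|=\nu\|A(x_1-x_2)\|\le \nu\|A\|_{\mathrm{op}}\|x_1-x_2\|$.

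Combining these, the proximal contribution is at most $\nu\|A^T\|_{\mathrm{op}}\|A\|_{\mathrm{op}}\|x_1-x_2\|$. The final step is the standard identity $\|A\|_{\mathrm{op}}=\|A^T\|_{\mathrm{op}}=\sqrt{\lambda_{\max}(A^TA)}$, which gives $\|A^T\|_{\mathrm{op}}\|A\|_{\mathrm{op}}=\lambda_{\max}(A^TA)$. Adding the two bounds yields $(L_f+\nu\lambda_{\max}(A^TA))\|x_1-x_2\|$, exactly as claimed.

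Since the argument is a chain of the triangle inequality, the Lipschitz hypothesis, submultiplicativity of the operator norm, and the nonexpansiveness from Lemma \ref{nonex}, I do not anticipate any serious obstacle. The only points requiring explicit care are the verification that $\nu h^*$ is convex so that Lemma \ref{nonex} legitimately applies, and the identification of the operator norm of $A$ with $\sqrt{\lambda_{\max}(A^TA)}$; both are routine, but I would state them rather than gloss over them, as they are precisely where the constant $\nu\lambda_{\max}(A^TA)$ originates.
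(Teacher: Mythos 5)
Your proposal is correct and follows essentially the same route as the paper: split $\nabla\phi_1(x_1)-\nabla\phi_1(x_2)$ via the triangle inequality using formula (\ref{nabla1}), bound the gradient part by the Lipschitz hypothesis, and bound the proximal part by pulling out $A^T$ with norm $\sqrt{\lambda_{\max}(A^TA)}$, applying the nonexpansiveness of Lemma \ref{nonex}, and absorbing the remaining factor $\nu\|A(x_1-x_2)\|\le\nu\sqrt{\lambda_{\max}(A^TA)}\|x_1-x_2\|$. Your explicit checks that $\nu h^*$ is convex and that $\|A\|_{\mathrm{op}}=\sqrt{\lambda_{\max}(A^TA)}$ are points the paper leaves implicit, but they do not change the argument.
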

\begin{proof}
	From Lemma \ref{nonex}, for $x_1,x_2 \in \Rb^n$, we have
	\begin{align}
		&\quad \|A^T(\prox_{\nu h^*}(\nu Ax_1 +\lambda^{(k)}))-A^T(\prox_{\nu h^*}(\nu Ax_2 +\lambda^{(k)}))\| \nonumber \\
		&\leq \sqrt{\lambda_{\max}(A^TA)} \times \|\prox_{\nu h^*}(\nu Ax_1 +\lambda^{(k)})-\prox_{\nu h^*}(\nu Ax_2 +\lambda^{(k)})\| \nonumber \\
		&\leq \sqrt{\lambda_{\max}(A^TA)} \times \|\nu Ax_1 +\lambda^{(k)}-(\nu Ax_2 +\lambda^{(k)})\| \nonumber \\
		&\leq \nu \lambda_{\max}(A^TA) \|x_1-x_2\|\ .
	\end{align}
	Thus, when $\nabla f$ is Lipschitz continuous with parameter $L_f$, we have
	\begin{align}
		\label{kinsetupara}
		\|\nabla \phi_1(x_1)-\nabla \phi_1(x_2)\|\leq (L_f+\nu\lambda_{\max}(A^TA))\|x_1-x_2\|\ ,
	\end{align}
\end{proof}
In Theorem \ref{teirilip}, the proximal gradient converges for $\eta := 1/(L_f+\nu \lambda_{\max}(A^TA))$.
Hence, it is possible to solve (\ref{teian}) efficiently
when $\nabla f$ is Lipschitz continuous.

The procedure (\ref{proximal}) is not as efficient as FISTA \citep{Beck2009}.
We show the modification to FISTA in Algorithm \ref{alg:fista}.

\begin{algorithm}[h]
	\caption{(FISTA for $\min \phi(x)$)  \newline Input :$z^{(0)}$, output :$z^{(\infty)}$}
	\label{alg:fista}
	\begin{enumerate}
		\item Initialize $u^{(1)}=z^{(0)},\eta\in (0,1),\alpha_1=1$. 
		
		For $j=1,2,\ldots$
		\item (Update $z$)
		\[
		z^{(j)} = \prox_{\eta g}(u^{(j)}-\eta \nabla \phi_1(u^{(j)}))
		\]
		\item (Update $\alpha$ and $u$)
		\begin{align*}
			\alpha_{j+1} &=\frac{1+\sqrt{1+4\alpha_j^2}}{2}\\
			u^{(j+1)}&=z_j+\frac{\alpha_j-1}{\alpha_{j+1}}(z_j-z_{j-1})
		\end{align*}
		\item Repeat Steps 2-3 until convergence to obtain $z=z^{(\infty)}$.
	\end{enumerate}
\end{algorithm}

Similarly, if we put $z=\nu^{-1}\lambda^{(k)}+Ax,\gamma=\nu^{-1}$ in Lemma \ref{moreau}, from (\ref{ycousin}), we obtain
\begin{align}
	\lambda^{(k)}+\nu(Ax-y^*(x)) = \prox_{\nu h^*}(\nu Ax +\lambda^{(k)})\ .
\end{align}
Thus, the update of $\lambda$ is
\begin{align}
	\lambda^{(k+1)}=\prox_{\nu h^*}(\lambda^{(k)} + \nu Ax^{(k+1)})
\end{align}
and we do not have to update the value of $y$ because it is not required to update $x,\lambda$.

We show the actual procedure in Algorithm \ref{alg:S-CD}.

\begin{algorithm}[h]
	\caption{(Proposed Method for solveing (\ref{seiyaku})) \newline Input: $x^{(1)},\lambda^{(1)}$, output: $x^{(\infty)},\lambda^{(\infty)}$}
	\label{alg:S-CD}
	\begin{enumerate}
		\item Initialize $\nu>0$. 
		
		For $k=1,2,\ldots$
		\item (Update $x$)
		
		Give  $x^{(k)}$ as input to Algorithm \ref{alg:fista} and take as output the value $x^{(k+1)}$.
		\item (Update $\lambda$)
		\[
		\lambda^{(k+1)} = \prox_{\nu h^*}(\lambda^{(k)} + \nu Ax^{(k+1)})
		\]
		\item Repeat Steps 2-3 until convergence to obtain $x=x^{(\infty)}$ and $\lambda=\lambda^{(\infty)}$.
	\end{enumerate}
\end{algorithm}

\section{Application to Sparse Convex Clustering}
\label{sp}
Let $X_{1\cdot},X_{2\cdot},\ldots,X_{n\cdot}\in \Rb^p$ be the $n$ observations w.r.t. $p$ variables.
Let $U_{i\cdot}$ and $u_j$ be the row and column vectors of a matrix $U\in \Rb^{n\times p}$.

The optimization of sparse convex clustering \citep{Wang2018} is formulated as follows:
\begin{align}
	\label{scvxsub}
	\min_{U}\;\frac{1}{2}\sum_{i=1}^n\|X_{i\cdot}-U_{i\cdot}\|_2^2+\gamma_1\sum_{(i,j)\in {\mathcal E}}w_{(i,j)}\|U_{i\cdot}-U_{j\cdot}\|_2 
	+\gamma_2 \sum_{j=1}^p r_j \|u_j\|_2 \ ,
\end{align}
where $\gamma_1,\gamma_2\geq 0$ are the regularized parameters, $w_{(i,j)}$ and $r_j\geq 0$ are nonnegative constants (weights),
and 
${\mathcal E} = \{(i,j);w_{ij}>0,1 \leq i < j \leq n \} $.

The objective function is the sum of the convex clustering's objective function and the group lasso regularization term.
Since all the elements associated with $u_j$ are expected to become zeros simultaneously when $\gamma_2$ is large,
sparse convex clustering can choose relevant variables for clustering.

To apply the proposed method, we rewrite (\ref{scvxsub}) as follows.
\begin{align}
	\label{scvxseiyaku}
	&\min_{U}\;\frac{1}{2}\sum_{i=1}^n\|X_{i\cdot}-U_{i\cdot}\|_2^2+\gamma_1\sum_{(i,j)\in {\mathcal E}}w_{(i,j)}\|v_{(i,j)}\|_2 
	+\gamma_2 \sum_{j=1}^p r_j \|u_j\|_2   \\
	& {\rm subject \quad to} \quad U_{i \cdot}-U_{j \cdot} - v_{(i,j)}=0 \quad ((i,j)\in {\mathcal E})\nonumber
\end{align}
We note that the optimization with the constraints above is equivalent to the minimization of the augmented Lagrangian below:
\begin{align}
	L_{\nu} (U,V,\Lambda)=&\frac{1}{2}\sum_{i=1}^n\|X_{i\cdot}-U_{i\cdot}\|_2^2+\gamma_1\sum_{(i,j)\in {\mathcal E}}w_{(i,j)}\|v_{(i,j)}\|_2 
	+\gamma_2 \sum_{j=1}^p r_j \|u_j\|_2   \nonumber \\
	&+\sum_{(i,j)\in {\mathcal E}}\langle\lambda_{(i,j)},v_{(i,j)}-U_{i \cdot}+U_{j \cdot}\rangle +\frac{\nu}{2}\sum_{(i,j)\in {\mathcal E}}\|v_{(i,j)}-U_{i \cdot}+U_{j \cdot}\|_2^2  \nonumber 
\end{align}

\subsection{Application of the Proposed Method}
In the following, we define $A_{\mathcal E}$ by
$A_{\mathcal E}U=(u_{i,k}-u_{j,k})_{(i,j)\in {\mathcal E},k=1,\ldots,p}$, and denote $\langle B,C \rangle =\trace(B^TC)$
for matrix $B,C$. If we define
\begin{align}
	f(U):&=\frac{1}{2}\sum_{i=1}^n\|X_{i\cdot}-U_{i\cdot}\|_2^2=\frac{1}{2}\|X-U\|_F^2\\
	g(U):&=\gamma_2 \sum_{j=1}^p r_j \|u_j\|_2\\
	h(V):&=\gamma_1\sum_{(i,j)\in {\mathcal E}}w_{(i,j)}\|v_{(i,j)}\|_2\ ,
\end{align}
then we have
\begin{align}
	\label{scvxlag}
	L_{\nu} (U,V,\Lambda)=f(U)+g(U)+h(V)+\langle \Lambda,V-A_{{\mathcal E}}U\rangle + \frac{\nu}{2}\|V-A_{{\mathcal E}}U\|_F^2\ ,
\end{align}
and can apply the proposed method.

Then, we consider the proximal gradient map of $h^*$. If we define $r(x):=C\|x\|_2$, then we have
\begin{align}
	r^*(y)=
	\begin{cases}
		0 & \ifq \|y\|_2\leq C\\
		\infty & \other
	\end{cases}\ ,
\end{align}
which means that for $Z=(z_{(i,j),k})_{(i,j)\in {\mathcal E},k=1,\ldots ,p}$, we have
\begin{align}
	\label{scvxhs}
	h^*(Z)=
	\begin{cases}
		0 &  \ifq \|z_{(i,j)}\|_2\leq \gamma_1 w_{(i,j)} \quad \for \quad \forall(i,j)\in {\mathcal E} \\
		\infty & \other
	\end{cases}\ .
\end{align}
Hence, if we map 
$P_C(Z)$ onto 
$C=\{Z\in \Rb^{{\mathcal E}\times p};\|z_{(i,j)}\|_2\leq \gamma_1 w_{(i,j)} \quad \for \quad   (i,j)\in {\mathcal E}\}$
of $Z$, we have
\begin{align*}
	\prox_{\nu h^*}(\Lambda^{(k)} + \nu A_{{\mathcal E}}U^{(k+1)})=P_{C}(\Lambda^{(k)} + \nu A_{{\mathcal E}}U^{(k+1)})\ .
\end{align*}

Finally, we consider the constant $L$ such that $\|\nabla_U \phi_1(U_1)-\nabla_U\phi_1(U_2)\|_F\leq L\|U_1-U_2\|_F$.
From Lemma \ref{nonex}, we have
\begin{align}
	&\quad \|A_\mate^T(\prox_{\nu h^*}(\Lambda + \nu A_{{\mathcal E}}U_1)-\prox_{\nu h^*}(\Lambda + \nu A_{{\mathcal E}}U_2))\|_F \nonumber \\
	&\leq \sqrt{\lambda_{\max}(A_\mate^TA_\mate)} \times \|\prox_{\nu h^*}(\Lambda + \nu A_{{\mathcal E}}U_1)-\prox_{\nu h^*}(\Lambda + \nu A_{{\mathcal E}}U_2)\|_F \nonumber \\
	&\leq \sqrt{\lambda_{\max}(A_\mate^TA_\mate)} \times \|\Lambda + \nu A_{{\mathcal E}}U_1-\Lambda - \nu A_{{\mathcal E}}U_2\|_F \nonumber \\
	&\leq \nu \lambda_{\max}(A_\mate^TA_\mate) \|U_1-U_2\|_F\ .
\end{align}
Since $\nabla_U f(U)=U-X$, we have
\begin{align}
	&\quad\|\nabla_U \phi_1(U_1)-\nabla_U\phi_1(U_2)\|_F \nonumber \\
	&\leq \|\nabla_U f(U_1)-\nabla_U f(U_2)\|_F +\|A_\mate^T(\prox_{\nu h^*}(\Lambda + \nu A_{{\mathcal E}}U_1)-\prox_{\nu h^*}(\Lambda + \nu A_{{\mathcal E}}U_2))\|_F \nonumber \\
	&\leq \|U_1-U_2\| + \nu \lambda_{\max}(A_\mate^TA_\mate) \|U_1-U_2\|_F\ ,
\end{align}
which means that the Lipshitz constant of $\nabla_U \phi_1(U)$ is upperbounded by $1+\nu \lambda_{\max}(A_\mate^TA_\mate)$.
For the derivation of $\lambda_{\max}(A_\mate^TA_\mate)$ and the setting of parameter $\eta$, see Appendix \ref{spap}.

\subsection{Experiments}

We constructed all the programs via Rcpp\footnote{The source code used in the experiments is available at \url{https://github.com/Theveni/SCC_TF}.}.
The AMA is an alternative to the ADMM such that the first step $x^{(k+1)}={\rm argmin}_x L_\nu(x,y^{(k)},\lambda^{(k)})$
is replaced by $x^{(k+1)}={\rm argmin}_x L_0(x,y^{(k)},\lambda^{(k)})$ in Section 2.2.
While the differences between the two algorithms appear to be minor, complexity analysis and numerical experiments show AMA to be significantly more efficient \citep{Chi2015}.

In all experiments, parameter of proposed method is $\eta=\frac{1}{1+\nu_k\max_{i}G_{ii}}$ in Appendix \ref{spap} and $\nu_1 = 1, \nu_{k+1} = 1.1 \nu_k$.Furthermore, the number of features that affect the clusters was set to $p_{true} = 20$.

The data were set to $n=1,000$ and $p = 500$, and 250 data points were generated independently from each of the Gaussian distributions with four different means.
As parameters, $w_{ij}$ used $\phi=\frac{0.5}{p}$, $k = 5$, and $v_i$ was set to 1.
Figure \ref{gamma1} shows the change in calculation time when we fix $\gamma_2 = 10$ and change $\gamma_1 $.
In Figure 1, we can see that the computation time of AMA changes significantly when $\gamma_1$ changes.
In particular, the AMA takes up to 230 seconds when $\gamma_1$ is larger than 5, i.e., when the size of each cluster is large. However, the computation time of the proposed method is stable even when $\gamma_1$ changes. Furthermore, the maximum computation time is only about 10 seconds for all $\gamma_1$, indicating that the computation time can be reduced.

Figure \ref{gamma2} shows the change in the calculation time when we fix $\gamma_1 = 10$ and change $\gamma_2$ for the same data.
We can see that when we change $\gamma_2$, the calculation time of AMA changes greatly depending on the value of $\gamma_2$, similar to the $\gamma_1$ case.
In particular, the AMA takes a long time when $\gamma_2$ is small, i.e., when the result has few zeros and is not sparse, and the maximum time is about 350 seconds. When $\gamma_2$ is large and the solution is sparse, AMA takes less time to compute.
In the proposed method, the fluctuation of the calculation time due to $\gamma_2$ is small, and the calculation time is shorter than that of AMA for all $\gamma_1$.

\begin{figure}[h]
	\begin{tabular}{cc}
		\begin{minipage}{.5\textwidth}
			\begin{center}
				\input{fig_1.tex}
				\caption{The changes in computation time due to $\gamma_1$}
				\label{gamma1}
			\end{center}
		\end{minipage}
		\begin{minipage}{.5\textwidth}
			\begin{center}
				\input{fig_2.tex}
				\caption{The changes in computational time due to $\gamma_2$}
				\label{gamma2}
			\end{center}
		\end{minipage}
	\end{tabular}
\end{figure}

Moreover, we show in Figure \ref{scvx_n} comparison of computation times for propothd method, generalized ADMM, AMA when we fix $p =500, \gamma_1 = 10, \gamma_2 = 10$ and change the number $n$ of data. AMA-FISTA is the calculation time when FISTA is applied to AMA.
The data are generated by $\frac{n}{5}$ from a Gaussian distribution with five different means.
In Figure \ref{scvx_n}, both AMA and AMA-FISTA show a large increase in computation time with respect to the increase in sample size, and the computation time is larger when the sample size is large than the other methods.
The generalized ADMM takes the longest computation time when the sample size is small, but when the sample size becomes large, it can solve the problem more efficiently than AMA and AMA-FISTA.
It can be seen that the proposed method has the smallest increase in computation time with increasing sample size, and the computation time is the shortest for all sample sizes.

Furthermore, we show in Figure \ref{scvx_p} comparison of computation times for propothd method, generalized ADMM, AMA when we fix $n=500,\gamma_1=5,\gamma_2=5$ and
change the number $p$ of variables.
The data were generated by $100$ each from a Gaussian distribution with five different means.
Both AMA and AMA-FISTA have long computation times when the feature dimension is small, but they have the shortest computation time when the feature dimension is large and the solution is sparse. On the other hand, generalized ADMM has a short computation time when the feature dimension is small, but when the feature dimension is large, the computation time is larger than the other methods.
The proposed method has the shortest computation time when the feature dimension is small, and the computation time is almost the same as that of AMA even when the feature dimension is large and sparse, indicating that it can solve the problem efficiently in all cases.

\begin{figure}[h]
	\begin{tabular}{cc}
		\begin{minipage}{.5\textwidth}
			\begin{center}
				\input{fig_3.tex}
				\caption{The changes in computational time due to the number $n$ of variables}
				\label{scvx_n}
			\end{center}
		\end{minipage}
		\begin{minipage}{.5\textwidth}
			\begin{center}
				\input{fig_4.tex}
				\caption{The changes in computational time due to the number $p$ of data}
				\label{scvx_p}
			\end{center}
		\end{minipage}
	\end{tabular}
	\begin{center}
	\end{center}
\end{figure}

\section{Application to Trend Filtering}
\label{tf}

The trend filtering optimization problem is formulated as
\begin{align}
	\label{tfsub}
	\min_x \; \frac{1}{2}\|y-x\|_2^2 + \gamma \|D^{(k+1)}x\|_1 
\end{align}
for an integer $k\geq 0$ and the observed data $y = (y_1, \ldots, y_n)^T \in {\mathbb R}^n$,
where $\gamma \geq 0 $ is the tuning parameter and $D^{(k + 1)}$ is the difference matrix of the order $k+1$ such that
\[
D^{(1)}=\begin{pmatrix}
	-1 & 1 & & &\bigzerol \\
	& -1 & 1 & & & \\
	& & \ddots &\ddots & & \\
	\bigzerou& & & -1 & 1
\end{pmatrix}
\]
for $k=0$, and
\[
D^{(k+1)}=D^{(1)}D^{(k)}\ .
\]

In Figures \ref{k1} and \ref{k2},
we show an example applied to $\sin \theta (0 \leq \theta \leq 2 \pi)$ when $k = 1$ and $k = 2$.
The points are the observation data, and the solid lines are obtained by smoothing via trend filtering.
We observe that the output becomes smoother as the degree $k$ increases.

\begin{figure}[h]
	\begin{tabular}{cc}
		\begin{minipage}{.5\textwidth}
			\begin{center}
				\input{fig_5.tex}
				\caption{Trend filtering with order $k=1$}
				\label{k1}
			\end{center}
		\end{minipage}
		\begin{minipage}{.5\textwidth}
			\begin{center}
				\input{fig_6.tex}
				\caption{Trend filtering with order $k=2$}
				\label{k2}
			\end{center}
		\end{minipage}
	\end{tabular}
	\begin{center}
	\end{center}
\end{figure}

\subsection{Application of the Proposed Method}
To apply the proposed method, we rewrite (\ref{tfsub}) as follows.
\begin{align}
	&\min_{x,y} \frac{1}{2}\|y-x\|_2^2 + \gamma \|z\|_1  \\
	& {\rm subject \quad to} \quad D^{(k+1)}x=z\nonumber
\end{align}
The augmented Lagrangian becomes
\[
L_{\nu}(x,y,\lambda)=\frac{1}{2}\|y-x\|_2^2+\gamma \|z\|_1+\langle \lambda , z-D^{(k+1)}x\rangle + \frac{\nu}{2}\|z-D^{(k+1)}x\|_2^2\ .
\]

If we define 
\begin{align}
	f(x):&=\frac{1}{2}\|y-x\|_2^2 \\
	g(x):&=0\\
	h(z):&=\gamma \|z\|_1\ ,
\end{align}
then we have
\begin{align}
	\label{tflag}
	L_{\nu} (x,z,\lambda)=f(x)+g(x)+h(z)+\langle \lambda , z-D^{(k+1)}x\rangle + \frac{\nu}{2}\|z-D^{(k+1)}x\|_2^2\ .
\end{align}
For this case, we have $\prox_{\eta g}(x-\eta \nabla \phi_1(x))=x-\eta \nabla \phi_1(x)$ due to $g(x)=0$, and
the update of (\ref{teian}) is the standard gradient method rather than the proximal gradient.
The upper bound of the Lipshitz constant in $\nabla \phi_1(x)$
is $1+\nu \lambda_{\max}((D^{(k+1)})^TD^{(k+1)})$, which can be derived from a similar discussion
in Section \ref{sp}.
For the evaluation of $\lambda_{\max}((D^{(k+1)})^TD^{(k+1)})$ and setting of
parameter $\eta>0$, see Appendix \ref{tfap}.

\subsection{Experiments}
We constructed all the programs via Rcpp .
Because the purpose of this paper is to establish the theory of transformation from the ADMM to the proximal gradient
we do not relate comparison with an ADMM procedure proposed in  \citet{trend_admm} that improves performance,
considering an efficient computation of the difference matrix.

In all experiments, parameter of proposed method is $\eta=\frac{1}{1+\nu_k 4^{k+1}}$ in Appendix \ref{tfap} and $\nu_1 = 1, \nu_{k+1} = 1.1 \nu_k$.

We generate $n = 1,000$ data by adding noise to $\sin \theta$, as shown in Figures \ref{k1}, \ref{k2}.
Figure \ref{tf_g_k1}, \ref{tf_g_k2} shows the change in calculation time when the value of $\gamma$ is changed with respect to $k = 1,2$.
For $k=1$, the computation time increases as $\gamma$ increases for both ADMM and the proposed method. The computation time of the proposed method is shorter than that of ADMM for all $\gamma$, and for large $\gamma$, i.e., the computation time is about $\frac{1}{4}$ in the sparse case where $D^{(k)}\beta$ of the solution $\beta$ has many $0$.

In the case of $k=2$, as in the case of $k=1$, the computation time of both methods increases as $\gamma$ increases. When $\gamma$ is small, the ADMM and the proposed method have similar computation times, but when $\gamma$ is large, the computation time is $\frac{1}{3}$. The results show that the proposed method is more efficient than ADMM in both cases of $k=1,k=2$.

\begin{figure}[h]
	\begin{tabular}{cc}
		\begin{minipage}{.5\textwidth}
			\begin{center}
				\input{fig_7.tex}
				\caption{The changes in computational time due to $\gamma$ when $k=1$}
				\label{tf_g_k1}
			\end{center}
		\end{minipage}
		\begin{minipage}{.5\textwidth}
			\begin{center}
				\input{fig_8.tex}
				\caption{The changes in computational time due to $\gamma$ when $k=2$}
				\label{tf_g_k2}
			\end{center}
		\end{minipage}
	\end{tabular}
	\begin{center}
	\end{center}
\end{figure}

\section{Conclusion}
\label{conclusion}

In this paper, we proposed a general method to convert the solution of the optimization problem by ADMM to the solution using the proximal gradient method.
In addition, numerical experiments showed that it can be applied to sparse estimation problems such as sparse convex clustering and trend filtering,
resulting in significant efficiency improvements.In particular, for both sparse convex clustering and trend filtering, the proposed method is much more efficient than existing methods such as ADMM when the regularization parameter is large such that the results are sparse. This suggests that the proposed method can perform efficient computation by making good use of the sparsity that the result becomes zero.

In applying the proposed method,
it is premised that a Lipschtz constant or an upper bound is obtained.
This method is expected to apply not only to existing sparse estimation problems but also to many problems of adding two regularization terms to the loss function.
In that case, the problem of finding an efficient solution is reduced to the problem of finding the Lipschtz coefficient.

In this study, we focus on sparse estimation and its surrounding problems,
however, it is necessary to actively apply it to optimization problems in general and further clarify its effectiveness.

\appendix
	
	\section{The Setting of Proximal Gradient Parameter $\eta$}
	\subsection{Sparse Convex Clustering}
	\label{spap}
	Let
	$A_\mate =(a_{(i,j),k})_{(i,j)\in \mate,k=1\ldots n}\in \Rb^{\mate\times n}$. Then, we have
	\begin{align}
		\label{Ae}
		a_{(i,j),k}=
		\begin{cases}
			1 & \ifq k=i \\
			-1 & \ifq  k=j \\
			0& \other 
		\end{cases}\ ,
	\end{align}
	and the $(i,j)$ element of $G$ for 
	$A_\mate^T A_\mate=:G\in \Rb^{n\times n}$ can be written as
	\begin{align}
		\label{scvxeigen}
		G_{ij}=\begin{cases}
			-1 & \ifq (i,j)\in \mate\\
			\sum_{k\neq i}^n \lvert G_{ik} \rvert & \ifq i=j \\
			0 & \other 
		\end{cases}\ .
	\end{align}
	Then, we notice the following lemma:
	\begin{hodai}[Gershgorin]
		\label{gershgorin}
		Assume we have symmetric matrix $A\in \Rb^{n\times n}$.
		\begin{align}
			\label{gersh}
			\lambda_{\max}(A)\leq \max_{i=1,\ldots , n}(a_{ii}+\sum_{j\neq i}^n\lvert a_{ij} \rvert)
		\end{align}
	\end{hodai}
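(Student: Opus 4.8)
The plan is to prove this as the classical Gershgorin circle theorem, localizing each eigenvalue inside one of the intervals $[a_{ii}-R_i,\,a_{ii}+R_i]$ with $R_i:=\sum_{j\neq i}|a_{ij}|$, and then reading off the upper endpoint. First I would let $\lambda_{\max}(A)$ be the largest eigenvalue of $A$ and fix a corresponding eigenvector $x\neq 0$, i.e. $Ax=\lambda_{\max}(A)\,x$. Because $A$ is symmetric, every eigenvalue is real, which is the feature that lets me convert a two-sided bound on $|\lambda-a_{ii}|$ into the one-sided upper bound claimed in (\ref{gersh}); I would flag this use of symmetry explicitly.

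The key computation proceeds by choosing the coordinate that dominates. Let $i$ be an index with $|x_i|=\max_{k}|x_k|$; since $x\neq 0$ we have $|x_i|>0$. Looking at the $i$th row of the eigenvalue equation gives
\[
\sum_{j=1}^n a_{ij}x_j=\lambda_{\max}(A)\,x_i,
\]
and isolating the diagonal term yields
\[
(\lambda_{\max}(A)-a_{ii})\,x_i=\sum_{j\neq i}a_{ij}x_j.
\]
I would then take absolute values, apply the triangle inequality, and use $|x_j|\leq|x_i|$ to obtain
\[
|\lambda_{\max}(A)-a_{ii}|\;\leq\;\sum_{j\neq i}|a_{ij}|\,\frac{|x_j|}{|x_i|}\;\leq\;\sum_{j\neq i}|a_{ij}|.
\]

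Finally I would conclude. Since $\lambda_{\max}(A)$ is real, the inequality $|\lambda_{\max}(A)-a_{ii}|\leq\sum_{j\neq i}|a_{ij}|$ forces $\lambda_{\max}(A)\leq a_{ii}+\sum_{j\neq i}|a_{ij}|$ for this particular index $i$, and bounding the right-hand side by the maximum over all rows gives exactly (\ref{gersh}). Honestly there is no serious obstacle here: the result is short and standard. The only point requiring care is the remark that symmetry (hence real spectrum) is what makes the argument deliver an \emph{upper} bound on $\lambda_{\max}(A)$ rather than merely a containment in a disc of the complex plane, which is all that is needed in Section~\ref{sp} to bound the Lipschitz constant via $\lambda_{\max}(A_\mate^TA_\mate)\le\max_i(G_{ii}+\sum_{j\neq i}|G_{ij}|)$.
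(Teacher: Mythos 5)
Your proof is correct. Note, however, that the paper itself gives no proof of this lemma at all: it is stated as a named classical result (Gershgorin's circle theorem) and invoked directly in Appendices A.1 and A.2, so there is no internal argument to compare yours against. What you have written is the standard textbook proof --- pick an eigenvector $x$ of the top eigenvalue, select a coordinate $i$ with $\lvert x_i\rvert = \max_k \lvert x_k\rvert > 0$, isolate the diagonal term in the $i$th row of $Ax=\lambda_{\max}(A)x$, and apply the triangle inequality together with $\lvert x_j\rvert\leq\lvert x_i\rvert$ to get $\lvert \lambda_{\max}(A)-a_{ii}\rvert\leq\sum_{j\neq i}\lvert a_{ij}\rvert$ --- and every step is sound. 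Your remark about where symmetry enters is also apt: realness of the spectrum is what turns the disc containment into the one-sided bound $\lambda_{\max}(A)\leq a_{ii}+\sum_{j\neq i}\lvert a_{ij}\rvert$, which is the form the paper needs to bound $\lambda_{\max}(A_{\mathcal E}^T A_{\mathcal E})$ and $\lambda_{\max}\left((D^{(k+1)})^T D^{(k+1)}\right)$ when setting the step size $\eta$.
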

	From Lemma \ref{gershgorin} because of
	\begin{align}
		\lambda_{\max}(A_\mate^TA_\mate)\leq 2\max_{i=1,\ldots,n}G_{ii}\, 
	\end{align}
	it is appropriate to set $\eta>0$ as
	\begin{align}
		\label{scvxpara}
		\eta = \frac{1}{1+2\nu\max_{i=1,\ldots,n}G_{ii}}\ .
	\end{align}
	
	\subsection{Trend Filtering}
	\label{tfap}
	For $k\geq 0$, $D^{(k+1)}\in \Rb^{(n-k)\times n}$ can be written as
	\footnotesize
	\[
	D^{(k+1)}=
	\begin{pmatrix}
		(-1)^{k+1}{}_{k+1}\Cm_0 & (-1)^{k+2}{}_{k+1}\Cm_1 & \cdots & _{k+1}\Cm_{k+1}&  &    &  \bigzerol \\
		&(-1)^{k+1}{}_{k+1}\Cm_0 & (-1)^{k+2}{}_{k+1}\Cm_1 & \cdots & {}_{k+1}\Cm_{k+1}& & & \\
		& & \ddots &\ddots & \ddots & \ddots & & &\\
		\bigzerou& & &(-1)^{k+1}{}_{k+1}\Cm_0 & (-1)^{k+2}{}_{k+1}\Cm_1 & \cdots & {}_{k+1}\Cm_{k+1}
	\end{pmatrix}\ ,
	\]
	\normalsize
	i.e., the $(i,j)$ element of $D^{(k+1)}$ is 
	\begin{align}
		D_{ij}^{(k+1)}=
		\begin{cases}
			(-1)^{k+1+j-i}{}_{k+1}\Cm_{j-i} & \ifq 0\leq j-i \leq k+1\\
			0 &\other
		\end{cases}\ .
	\end{align}
	Thus, from Lemma \ref{gershgorin}, we have
	\begin{align}
		\lambda_{\max}\left((D^{(k+1)})^TD^{(k+1)}\right) &\leq \max_{i=1,\ldots ,n} \sum_{j=1}^n \lvert \left((D^{(k+1)})^TD^{(k+1)}\right)_{ij}\rvert \nonumber \\
		&=\max_{i=1,\ldots,n}\lvert \sum_{j=1}^n\sum_{s=1}^n D_{si}D_{sj} \rvert \nonumber \\
		&\leq \max_{i=1,\ldots,n}\sum_{j=1}^n\sum_{s=1}^n \lvert D_{si}D_{sj} \rvert \nonumber \\
		&\leq \sum_{j=0}^{k+1}\sum_{s=0}^{k+1} {}_{k+1}\Cm_j \times  {}_{k+1}\Cm_s \nonumber \\
		& = (\sum_{j=0}^{k+1} {}_{k+1}\Cm_j)^2 = 4^{k+1}\ ,
	\end{align}
	and it is appropriate to set $\eta>0$ as 
	\[
	\eta = \frac{1}{1+\nu 4^{k+1}}\ .
	\]
	
	
	



\begin{thebibliography}{24}
\providecommand{\natexlab}[1]{#1}
\providecommand{\url}[1]{\texttt{#1}}
\expandafter\ifx\csname urlstyle\endcsname\relax
  \providecommand{\doi}[1]{doi: #1}\else
  \providecommand{\doi}{doi: \begingroup \urlstyle{rm}\Url}\fi

\bibitem[Akaike(1974)]{akaike1974new}
Hirotugu Akaike.
\newblock A new look at the statistical model identification.
\newblock \emph{IEEE transactions on automatic control}, 19\penalty0
  (6):\penalty0 716--723, 1974.

\bibitem[Beck and Teboulle(2009)]{Beck2009}
Amir Beck and Marc Teboulle.
\newblock A fast iterative shrinkage-thresholding algorithm for linear inverse
  problems.
\newblock \emph{SIAM journal on imaging sciences}, 2\penalty0 (1):\penalty0
  183--202, 2009.

\bibitem[Boyd et~al.(2011)Boyd, Parikh, and Chu]{BoydADMM}
Stephen Boyd, Neal Parikh, and Eric Chu.
\newblock \emph{Distributed optimization and statistical learning via the
  alternating direction method of multipliers}.
\newblock Now Publishers Inc, 2011.

\bibitem[Chi and Lange(2015)]{Chi2015}
Eric~C Chi and Kenneth Lange.
\newblock Splitting methods for convex clustering.
\newblock \emph{Journal of Computational and Graphical Statistics}, 24\penalty0
  (4):\penalty0 994--1013, 2015.

\bibitem[Davis and Yin(2017)]{Davis-yinsp}
Damek Davis and Wotao Yin.
\newblock A three-operator splitting scheme and its optimization applications.
\newblock \emph{Set-valued and variational analysis}, 25\penalty0 (4):\penalty0
  829--858, 2017.

\bibitem[Deng and Yin(2016)]{deng2016global}
Wei Deng and Wotao Yin.
\newblock On the global and linear convergence of the generalized alternating
  direction method of multipliers.
\newblock \emph{Journal of Scientific Computing}, 66\penalty0 (3):\penalty0
  889--916, 2016.

\bibitem[Gabay and Mercier(1976)]{Gabay1976}
Daniel Gabay and Bertrand Mercier.
\newblock A dual algorithm for the solution of nonlinear variational problems
  via finite element approximation.
\newblock \emph{Computers \& mathematics with applications}, 2\penalty0
  (1):\penalty0 17--40, 1976.

\bibitem[Hastie et~al.(2019)Hastie, Tibshirani, and
  Wainwright]{hastie2019statistical}
Trevor Hastie, Robert Tibshirani, and Martin Wainwright.
\newblock \emph{Statistical learning with sparsity: the lasso and
  generalizations}.
\newblock Chapman and Hall/CRC, 2019.

\bibitem[Hocking et~al.(2011)Hocking, Joulin, Bach, and Vert]{Hocking2011}
Toby~Dylan Hocking, Armand Joulin, Francis Bach, and Jean-Philippe Vert.
\newblock Clusterpath an algorithm for clustering using convex fusion
  penalties.
\newblock In \emph{28th international conference on machine learning}, page~1,
  2011.

\bibitem[Kim et~al.(2009)Kim, Koh, Boyd, and Gorinevsky]{Kim2009}
Seung-Jean Kim, Kwangmoo Koh, Stephen Boyd, and Dimitry Gorinevsky.
\newblock $\backslash$ell\_1 trend filtering.
\newblock \emph{SIAM review}, 51\penalty0 (2):\penalty0 339--360, 2009.

\bibitem[Lindsten et~al.(2011)Lindsten, Ohlsson, and Ljung]{Lindsten2011}
Fredrik Lindsten, Henrik Ohlsson, and Lennart Ljung.
\newblock Clustering using sum-of-norms regularization: With application to
  particle filter output computation.
\newblock \emph{2011 IEEE Statistical Signal Processing Workshop (SSP)}, Jun
  2011.

\bibitem[Ma(2016)]{APGM}
Shiqian Ma.
\newblock Alternating proximal gradient method for convex minimization.
\newblock \emph{Journal of Scientific Computing}, 68\penalty0 (2):\penalty0
  546--572, 2016.

\bibitem[Moreau(1965)]{Moreau1965}
Jean~Jacques Moreau.
\newblock Proximit\'e et dualit\'e dans un espace hilbertien.
\newblock \emph{Bulletin de la Soci\'et\'e Math\'ematique de France},
  93:\penalty0 273--299, 1965.

\bibitem[Pelckmans et~al.(2005)Pelckmans, De~Brabanter, Suykens, and
  De~Moor]{Pelckmans2005}
Kristiaan Pelckmans, Joseph De~Brabanter, Johan~AK Suykens, and Bart De~Moor.
\newblock Convex clustering shrinkage.
\newblock In \emph{PASCAL Workshop on Statistics and Optimization of Clustering
  Workshop}, 2005.

\bibitem[Ramdas and Tibshirani(2016)]{trend_admm}
Aaditya Ramdas and Ryan~J Tibshirani.
\newblock Fast and flexible admm algorithms for trend filtering.
\newblock \emph{Journal of Computational and Graphical Statistics}, 25\penalty0
  (3):\penalty0 839--858, 2016.

\bibitem[Rockafellar(1970)]{rockafellar1970}
R~Tyrrell Rockafellar.
\newblock \emph{Convex analysis}, volume~36.
\newblock Princeton university press, 1970.

\bibitem[Rockafellar(1976)]{Rockafellar1976}
R~Tyrrell Rockafellar.
\newblock Augmented lagrangians and applications of the proximal point
  algorithm in convex programming.
\newblock \emph{Mathematics of operations research}, 1\penalty0 (2):\penalty0
  97--116, 1976.

\bibitem[Schwarz(1978)]{schwarz1978estimating}
Gideon Schwarz.
\newblock Estimating the dimension of a model.
\newblock \emph{The annals of statistics}, pages 461--464, 1978.

\bibitem[Suzuki(2021)]{Suzuki21}
Joe Suzuki.
\newblock \emph{Sparse Estimation with Math and {R} - 100 Exercises for
  Building Logic}.
\newblock Springer, 2021.
\newblock ISBN 978-981-16-1445-3.
\newblock \doi{10.1007/978-981-16-1446-0}.
\newblock URL \url{https://doi.org/10.1007/978-981-16-1446-0}.

\bibitem[Tibshirani(1996)]{lasso}
Robert Tibshirani.
\newblock Regression shrinkage and selection via the lasso.
\newblock \emph{Journal of the Royal Statistical Society: Series B
  (Methodological)}, 58\penalty0 (1):\penalty0 267--288, 1996.

\bibitem[Tibshirani et~al.(2005)Tibshirani, Saunders, Rosset, Zhu, and
  Knight]{fused}
Robert Tibshirani, Michael Saunders, Saharon Rosset, Ji~Zhu, and Keith Knight.
\newblock Sparsity and smoothness via the fused lasso.
\newblock \emph{Journal of the Royal Statistical Society: Series B (Statistical
  Methodology)}, 67\penalty0 (1):\penalty0 91--108, 2005.

\bibitem[Tseng(1991)]{ama}
Paul Tseng.
\newblock Applications of a splitting algorithm to decomposition in convex
  programming and variational inequalities.
\newblock \emph{SIAM Journal on Control and Optimization}, 29\penalty0
  (1):\penalty0 119--138, 1991.

\bibitem[Wang et~al.(2018)Wang, Zhang, Sun, and Fang]{Wang2018}
Binhuan Wang, Yilong Zhang, Will~Wei Sun, and Yixin Fang.
\newblock Sparse convex clustering.
\newblock \emph{Journal of Computational and Graphical Statistics}, 27\penalty0
  (2):\penalty0 393--403, 2018.

\bibitem[Yuan and Lin(2006)]{Yuan2006}
Ming Yuan and Yi~Lin.
\newblock Model selection and estimation in regression with grouped variables.
\newblock \emph{Journal of the Royal Statistical Society: Series B (Statistical
  Methodology)}, 68\penalty0 (1):\penalty0 49--67, 2006.

\end{thebibliography}



\end{document}